\documentclass[11pt]{article}
\usepackage{amssymb,latexsym,amsmath,epsfig,amsthm} 

\makeatletter

\renewcommand\section{\@startsection {section}{1}{\z@}
{-30pt \@plus -1ex \@minus -.2ex}
{2.3ex \@plus.2ex}
{\normalfont\normalsize\bfseries\boldmath}}

\renewcommand\subsection{\@startsection{subsection}{2}{\z@}
{-3.25ex\@plus -1ex \@minus -.2ex}
{1.5ex \@plus .2ex}
{\normalfont\normalsize\bfseries\boldmath}}

\renewcommand{\@seccntformat}[1]{\csname the#1\endcsname. }

\makeatother

\newtheorem{theorem}{Theorem}
\newtheorem{lemma}{Lemma}

\newtheorem{corollary}{Corollary}

\theoremstyle{definition}
\newtheorem{definition}{Definition}

\newtheorem{remark}{Remark}

\usepackage{tikz,fullpage}
\usetikzlibrary{arrows,%
                petri,%
                topaths}%
\usepackage{tkz-berge}
\def\ap#1{{\left<{#1}\right>}}

\begin{document}




\title{ON C-POLYNOMIAL FACTORISATIONS}
\author{Harry Hylock, Matthew C. Lettington, Karl Michael Schmidt\\Cardiff School of Mathematics }
\date{\today}

\maketitle

\begin{abstract}
{\it c\/}-Polynomials are polynomials whose (non-zero) coefficients are all
equal to 1.
Taking a theorem by Carlitz and Moser as a motivation, we prove a structure
theorem for the factorisations of the {\it c\/}-polynomial $(x^n-1)/(x-1)$ into
{\it c\/}-irreducible factors by relating factorisations into $c$-polynomials
to joint ordered factorisations arising from the integer $n$.
The question of how many different joint ordered factorisations there are leads
to the precise chromatic polynomial for path graphs, counting the ways of
path graph colourings with $m$ colours under the condition that all colours
are used. Moreover, we give similar counting formulae for joint ordered
factorisations under the constraints that all factors are primes, or that all
factors are square-free.
\end{abstract}

\section{Introduction}
\begin{definition}
A {\it c-polynomial\/} is a polynomial with all non-zero coefficients equal to
1, i.e.\ of the general form
$$
 C_A(x) := \sum_{a \in A} x^a = x^{a_1} + x^{a_2} + \cdots + x^{a_k},
$$
with a finite set of exponents
$A = \{a_1, a_2, \dots, a_k\} \subset \mathbb{N}_0$.
\end{definition}
In this paper we will frequently refer to the {\it c\/}-polynomials that
include consecutive powers starting from 0, the polynomials
$$C_\ap{n} = \sum_{j=0}^{n-1} x^j = \frac{x^n - 1}{x - 1},$$
which have as roots all $n$-th roots of unity except 1.
For the arithmetic progression we here use the notation
$\ap{n} := \{0, 1, 2, \cdots, n-1\}$.
We call the polynomial $C_\ap{1} = C_{\{0\}} = 1$ trivial.
\begin{definition}
A {\it c-factorisation\/} is the polynomial factorisation of a {\it c\/}-polynomial into non-trivial {\it c\/}-polynomial factors.
A {\it c\/}-polynomial is called {\it c-irreducible\/} if there are no {\it c\/}-factorisations of it with more than one factor.
\end{definition}
\begin{remark}
We stress the difference between the subset of {\it c\/}-polynomials in the polynomial ring $\mathbb{Z}[x]$
on the one hand and the polynomial ring $\mathbb{Z}_2[x]$ over the field
$\mathbb{Z}_2 = \mathbb{Z}/(2\mathbb{Z})$ on the other.
While both sets formally contain the same polynomials (with non-zero coefficients
1 only),
the algebraic properties are very different.
For example, the {\it c\/}-polynomial
$1 + x^2 = C_2(x^2)$ is {\it c\/}-irreducible (see Lemma \ref{lem:cirr} below)
whereas in $\mathbb{Z}_2[x]$ the polynomial
$1 + x^2 = (1 + x)^2$ is evidently reducible.
\end{remark}
We'll also need the concept of joint ordered factorisation of an ordered set
of integers, as introduced in \cite{2017SASS}.
The underlying idea is to split given numbers $n_1, n_2, \dots, n_m$ into
non-trivial factors (i.e.\ factors greater than 1) and then put all these
factors into an ordered list such that no two consecutive factors arise from
the same number $n_j$.
\begin{definition}
Let $m\in\mathbb{N}$ and $n_1, \dots, n_m \in \mathbb{N}$, $n_j > 1$ $(j\in\{1, \dots, m\})$.
Then, with $L \in \mathbb{N}$, we call the ordered set of pairs
$$((j_1, f_1), (j_2, f_2), \dots, (j_L, f_L))$$
a {\it joint ordered factorisation of\/} $(n_1, n_2, \dots, n_m)$ if
$j_1, j_2, \dots, j_L \in \{1, \dots, m\}$ with $j_l \neq j_{l-1}$ $(l \in \{2, \dots, L\})$ and
$f_1, f_2, \dots, f_L \in \mathbb{N}$, $f_l > 1$ $(l \in\{1, \dots, L\})$ are such
that
$$\prod_{l : j_l = j} f_l = n_j,\qquad(j \in \{1, \dots, m\}).$$
We refer to $j_l$ as indices and to $f_l$ as factors in the joint ordered
factorisation.
\end{definition}
Joint ordered factorisations are used in the present work as a tool to identify
the structure of {\it c\/}-factorisations and to count them.
This often involves considering not just the joint ordered factorisations of
a fixed $m$-tuple $(n_1, n_2, \dots, n_m)$ but of all such tuples
with given product
$n = \prod_{j=1}^m n_j$, with fixed or variable $m$.
We refer to these groups of joint ordered factorisations
as follows.
\begin{definition}
Let $n\in\mathbb{N}$, $n > 1$.
Let $m\in\mathbb{N}$. Then every joint ordered factorisation of a tuple
$(n_1, n_2, \dots, n_m)$ such that $n = \prod_{j=1}^m n_j$ is called an
{\it $m$-part joint ordered factorisation arising from\/} $n$.

More generally, a {\it joint ordered factorisation arising from\/} $n$ is
an $m$-part joint ordered factorisation arising from $n$ for any suitable $m$.
\end{definition}
For $n\in\mathbb{N}$, $\Omega(n)$ denotes the number of all (not necessarily distinct)
prime factors of $n$ and $\omega(n)$ denotes the number of different prime
factors of $n$.
Thus $n = p_1^{\alpha_1}\,p_2^{\alpha_2} \cdots p_{\omega(n)}^{\alpha_{\omega(n)}}$ with $p_1, p_2, \dots, p_{\omega(n)}$ distinct primes and
$\alpha_1, \alpha_2, \dots, \alpha_{\omega(n)} \in \mathbb{N}$; then
$\Omega(n) = \sum_{j=1}^{\omega(n)} \alpha_j$.

\section{{\it c\/}-irreducible {\it c\/}-factorisations}
Carlitz and Moser have the following statement \cite[Theorem 1]{C&M66}.
\begin{theorem}
\label{thm:CM}
Let $n = \prod_{j=1}^{\Omega(n)} p_j$ with (not necessarily distinct) primes
$p_j$ $(j \in \{1, \dots, \Omega(n)\})$.
Then
$$
 C_\ap{n}(x) = C_\ap{p_1}(x)\,C_\ap{p_2}(x^{p_1})\,C_\ap{p_3}(x^{p_1 p_2}) \cdots C_\ap{p_{\Omega(n)}}(x^{p_1 p_2 \cdots p_{\Omega(n)-1}}),
$$
where each factor on the right-hand side is {\it c}-irreducible.
Moreover, all factorisations of $C_\ap{n}$ into {\it c\/}-irreducible factors are obtained in this way.
\end{theorem}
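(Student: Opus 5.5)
The plan is to translate everything into additive combinatorics of exponent sets. If $C_A(x)\,C_B(x)=C_\ap{n}(x)$ with $A,B\subset\mathbb{N}_0$, then, since all coefficients are non-negative, each $k\in\ap{n}$ has exactly one representation $k=a+b$ with $a\in A$, $b\in B$, and no other sums occur. So $A\oplus B=\ap{n}$ is a direct sum (a tiling of the interval), and in particular $0\in A\cap B$. More generally, a product $F_1\cdots F_r=C_\ap{n}$ of $c$-polynomials corresponds to an iterated direct sum of the exponent sets. Also, the product of any subfamily of the $F_i$ is again a $c$-polynomial, because its coefficients are non-negative integers bounded coefficientwise by those of $C_\ap{n}$.

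\textbf{Key lemma (tilings of an interval).} Suppose $A\oplus B=\ap{n}$ with $1\in A$ and $B\neq\{0\}$, and let $d>1$ be the least positive element of $B$. I claim that $d\mid n$, and that
$$A=\ap{d}\oplus dA',\qquad B=dB',\qquad A'\oplus B'=\ap{n/d}.$$
Note $\ap{d}\subseteq A$, since each $k<d$ can only be written as $k+0$. The proof is by induction over the blocks $[kd,(k+1)d)$. The inductive claim is that $B\cap[0,kd)\subseteq d\mathbb{Z}$, and that $A\cap[jd,(j+1)d)$ is either empty or the full block for each $j<k$. For the step, take the least element $t$ of block $k$ not yet covered by sums with both summands in earlier blocks. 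Then $t=a+b$ with $a$ or $b$ new. If $b$ is new, then $b=t$ and uniqueness forces $t\equiv0 \pmod d$, since otherwise $t-1=a'+b'$ with earlier $b'$ collides. If $a$ is new, one shows the whole interval from $a$ up to the next multiple of $d$ lies in $A$. Otherwise some element would have to be represented via a $b\notin d\mathbb{Z}$ below it, which contradicts the inductive claim. This block bookkeeping is the step I expect to be the main obstacle; everything else is formal. Once it is done, dividing by $d$ gives $A'\oplus B'=\ap{n/d}$. Since $n-1=\max A+\max B\equiv d-1 \pmod d$, we get $d\mid n$.

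\textbf{Irreducibility of the factors.} Suppose $C_\ap{p}(x^m)=C_A C_B$ is a $c$-factorisation. Since $0\in A\cap B$, we have $A,B\subseteq A\oplus B=m\ap{p}\subset m\mathbb{Z}$. Dividing by $m$ gives a nontrivial tiling of $\ap{p}$, which by the lemma would give a divisor $1<d<p$ of $p$, a contradiction. So each factor $C_\ap{p_j}(x^{p_1\cdots p_{j-1}})$ is $c$-irreducible. The displayed identity itself follows by telescoping, using $C_\ap{p}(x^m)=(x^{pm}-1)/(x^m-1)$. The same telescoping shows $C_\ap{ab}=C_\ap{a}(x)\,C_\ap{b}(x^a)$, so $C_\ap{d}$ is $c$-reducible unless $d$ is prime.

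\textbf{Every irreducible $c$-factorisation is of this form.} I argue by induction on $\Omega(n)$. Let $C_\ap{n}=F_1\cdots F_r$ with all $F_i$ $c$-irreducible. The coefficient of $x$ in the product is $1$, so exactly one factor, say $F_1=C_A$, contains $x^1$. Let $C_B=F_2\cdots F_r$, which is a $c$-polynomial by the remark above. If $r=1$, then $C_\ap{n}$ is irreducible, and $n$ is prime. Otherwise the lemma gives $C_A=C_\ap{d}(x)\,C_{A'}(x^d)$. Irreducibility of $F_1$ forces $A'=\{0\}$, so $F_1=C_\ap{d}$, and then $d=p_1$ must be prime. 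The sets of $F_2,\dots,F_r$ lie inside $B\subseteq d\mathbb{Z}$, so $F_i(x)=G_i(x^d)$. Each $G_i$ is $c$-irreducible, since a $c$-factorisation of $G_i$ would yield one of $F_i$ by substituting $x^d$. Moreover $G_2\cdots G_r=C_\ap{n/d}$. The induction hypothesis then writes the $G_i$ as $C_\ap{p_2}(y)\,C_\ap{p_3}(y^{p_2})\cdots$ with $p_2\cdots p_{\Omega(n)}=n/p_1$. Substituting $y=x^{p_1}$ gives exactly the claimed form.
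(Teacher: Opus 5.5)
Your overall argument is correct, and it takes a genuinely different route from the paper. The paper gets everything from Lemma~\ref{lem:hls}, i.e.\ from the external structure theorem for $m$-part sum systems \cite[Theorem 6.7]{2017SASS}. Each factor of a $c$-factorisation is $\prod_{l:j_l=j}C_{\ap{f_l}}(x^{F_l})$, so requiring every factor to be $c$-irreducible at once forces $L=\Omega(n)$ distinct indices and prime $f_l$. You instead prove only the two-set case of that structure and bootstrap it by induction on $\Omega(n)$. Your key lemma is exactly the corrected form of the Carlitz--Moser Lemma~1 refuted in Remark~\ref{rmk:CM}: the factor containing $x$ is $C_{\ap{d}}(x)\,C_{A'}(x^d)$, and the cofactor is a polynomial in $x^d$. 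What lets two parts suffice is your observation that every sub-product of the factors is again a $c$-polynomial. Your route is self-contained and in effect repairs Carlitz and Moser's original strategy. The paper's route is shorter once the sum-system theorem is granted, and it also yields the explicit description of all $c$-factorisations (not only $c$-irreducible ones), which Section~3 builds on. Your irreducibility argument ($A,B\subseteq A\oplus B\subset m\mathbb{Z}$, then rescale) is a tidier version of the one in Lemma~\ref{lem:cirr}.

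The key lemma is true, but the block step as you sketch it has two holes. First, in the ``$a$ new'' case you also need $a=kd$; otherwise block $k$ meets $A$ in neither $\emptyset$ nor the full block. Second, the offending $b\notin d\mathbb{Z}$ may lie in block $k$ itself, where your inductive claim says nothing, since it only controls $B\cap[0,kd)$.

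A clean repair runs as follows. By the inductive hypothesis, the set $E$ of sums with both summands below $kd$ is a union of complete $d$-blocks, so there are two cases. If block $k\subseteq E$, it contains no new element of $A$ or $B$, since $x=x+0$ or $x=0+x$ would collide with the representation of $x$ in $E$. If block $k$ is disjoint from $E$, then $kd\in A$ or $kd\in B$.

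If $kd\in B$, the whole block is already represented as $r+kd$ with $r\in\ap{d}$, which excludes any further element of $A$ or $B$ there. If $kd\in A$, let $x$ be the least element of the block not in $A$. Its representation must be $(x-b)+b$ with $b\in B\cap(kd,x]$. If $b<x$, then $b\in A\cap B$, which is impossible, so $b=x$. But then $(x-1)+d=(d-1)+x$ is a forbidden double representation.

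Finally, $kd\in A$ (resp.\ $kd\in B$) puts $kd+d$ (resp.\ $kd+d-1$) into $A\oplus B=\ap{n}$, so no block is truncated. This is what legitimises your closing argument via $\max A+\max B$ that $d\mid n$.
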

In the following, we give a proof of this theorem, noting that the proof
provided in \cite{C&M66} is incomplete, see Remark \ref{rmk:CM} below.
\begin{lemma}
\label{lem:hls}
Let $n \in\mathbb{N}, n > 1$.
Every ordered $c$-factorisation of $C_\ap{n}$ into $m$ $c$-polynomials corresponds to a
unique $m$-part joint ordered factorisation arising from $n$ and vice versa.

Specifically, the $j$-th {\it c}-polynomial factor in the {\it c}-factorisation has the form
$\prod_{l : j_l = j} C_\ap{f_l}(x^{F_l})$,
where
$((j_1, f_1), (j_2, f_2), \dots, (j_L, f_L))$ is the joint ordered factorisation
and
$F_l := \prod_{s=1}^{l-1} f_s$ $(l \in \{1, \dots, L\})$.
\end{lemma}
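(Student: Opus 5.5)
The plan is to turn the statement into one about sets of exponents and then peel off one factor at a time, recursing on $n$.

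\textbf{Reformulation.} An ordered $c$-factorisation $C_\ap{n}=C_{A_1}C_{A_2}\cdots C_{A_m}$ with non-trivial factors is equivalent to the following condition on the exponent sets. Every $k\in\ap{n}$ has exactly one representation $k=a_1+\cdots+a_m$ with $a_j\in A_j$, and no other sums occur. This holds because all coefficients are non-negative, so there is no cancellation: the product has all coefficients $1$ on exactly $\ap n$ if and only if the sumset is direct and equals $\ap n$. Since all elements are non-negative and $0\in\ap n$, we get $0\in A_j$ for every $j$.

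\textbf{Converse direction.} Start from a joint ordered factorisation. Set $F_{L+1}=n$, which is the product of all the $f_l$. The telescoping identity
$$\prod_{l=1}^{L} C_\ap{f_l}(x^{F_l})=\prod_{l=1}^{L}\frac{x^{F_{l+1}}-1}{x^{F_l}-1}=C_\ap{n}(x)$$
follows from $C_\ap{f}(y)=(y^f-1)/(y-1)$. Now group the factors by index $j$. Each group $\prod_{l:j_l=j}C_\ap{f_l}(x^{F_l})$ is a $c$-polynomial, because the sets $F_l\ap{f_l}$ combine into a direct sum; this directness is inherited from the directness of the full product. Each group is also non-trivial, since $n_j>1$.

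\textbf{Forward direction: peeling one factor.} Induct on $n$, with $n=1$ trivial (empty factorisation). The integer $1$ lies in exactly one set; call its index $j_1$. Let $S=\bigoplus_{j\ne j_1}A_j$. If $S=\{0\}$ then $A_{j_1}=\ap n$, and we get $L=1$, $f_1=n$. Otherwise, let $f$ be the least positive element of $S$ (so $f\ge2$), and put $f_1:=f$. All $k<f$ must be represented using $S$-component $0$, so $\ap f\subseteq A_{j_1}$. The key claim is
$$f\mid n,\qquad S\subseteq f\mathbb{Z},\qquad A_{j_1}=\ap f\oplus fB \text{ for some } B\ni 0 .$$

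I would prove the key claim by a greedy tiling argument. Scan $\ap n$ in increasing order and show by strong induction on $t$ that $[0,tf)$ is tiled by the blocks $a+\ap f$ with $a\in (A_{j_1}\cap f\mathbb Z)+S$, all of them aligned at multiples of $f$. Take the least integer $k$ not yet covered. It is a multiple of $f$ by the induction hypothesis, and it is $a+s$ for some $a\in A_{j_1}$, $s\in S$.

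The hardest part is to show that then $k+1,\dots,k+f-1$ are exactly $(a+i)+s$ for $i<f$, and that no element of $A_{j_1}$ or $S$ sits at a non-multiple of $f$ in a way that would break uniqueness. I would first try to exploit that $s+\ap f\subseteq A_{j_1}+s$ is already forced, since $\ap f\subseteq A_{j_1}$. Uniqueness then forbids any other representation landing in that block. Comparing the smallest misaligned element of $A_{j_1}$ or $S$ against these forced blocks should give a contradiction. Finally, $n$ being a union of complete blocks gives $f\mid n$.

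\textbf{Recursion and uniqueness.} Dividing by $f$ gives a direct decomposition of $\ap{n/f}$ into $B$ and the sets $A_j/f$ for $j\ne j_1$. Since $f\in S$, the new element $1$ lies in some $A_j/f$ with $j\ne j_1$, which forces $j_2\ne j_1$. Apply the induction hypothesis to $n/f$; if $B=\{0\}$, index $j_1$ simply does not reappear. Substituting $x\mapsto x^{f}$ shifts every $F_l$ by the factor $f_1$, which reproduces the stated form of each factor. Uniqueness in both directions follows because $(j_1,f_1)$ is determined by the sets, as the location of $1$ and the least positive element of $S$, and the rest is determined inductively.
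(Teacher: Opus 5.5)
Your outline takes a genuinely different route from the paper. The paper reduces to $m$-part sum systems exactly as in your reformulation and then imports the structure theorem \cite[Theorem 6.7]{2017SASS}. You instead try to re-prove that theorem by peeling off $(j_1,f_1)$. The surrounding steps are sound:
the telescoping product and inherited directness for the converse; $0\in A_j$; the location of $1$; $\ap{f}\subseteq A_{j_1}$; the division by $f$; $j_2\neq j_1$ because $f\in S$; and the recovery of $(j_1,f_1)$ for uniqueness.

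The gap is the key claim ($f\mid n$, $S\subseteq f\mathbb{Z}$, $A_{j_1}=\ap{f}\oplus fB$), which is the non-trivial core of the cited theorem. You do not prove it. Saying ``comparing the smallest misaligned element \dots\ should give a contradiction'' is a hope, not an argument. Concretely, your observation that $s+\ap{f}\subseteq A_{j_1}+s$ is forced only helps when the least uncovered multiple $tf$ is represented as $a+s$ with $s\neq 0$. It gives nothing when $tf\in A_{j_1}$ (i.e.\ $s=0$). That is exactly the case where you must show that a new block of $A_{j_1}$ is complete and that $S$ avoids $(tf,tf+f)$. Moreover, your inductive hypothesis (a tiling of $[0,tf)$) is too weak as stated. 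It must record that $A_{j_1}\cap[0,tf)$ is a union of blocks $cf+\ap{f}$ and that $S\cap[0,tf)\subseteq f\mathbb{Z}$, because these are what the step uses.

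The step can be closed by using directness in the form $(A_{j_1}-A_{j_1})\cap(S-S)=\{0\}$.
\begin{itemize}
\item First, $(t+1)f\le n$. If $s\neq0$, then $tf+f-1=(a+f-1)+s$; if $s=0$, then $tf+f=tf+f\in A_{j_1}+S$. Iterating this gives $f\mid n$.
\item If $s\neq 0$: then $s\in f\mathbb{Z}$, and $a$ is either $0$ or an earlier block start, so $tf+i=(a+i)+s$ for $i\in\ap{f}$. Any $tf+i\in A_{j_1}$ with $0\le i<f$, or $tf+i\in S$ with $0<i<f$, would give a second representation of $tf+i$.
\item If $s=0$: then $tf\notin S$. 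Also $tf+i\in S$ with $0<i<f$ is impossible, because $(tf+i)-f=tf-(f-i)$ would be a non-zero element of both $S-S$ and $A_{j_1}-A_{j_1}$ (recall $f\in S$ and $f-i\in\ap{f}\subseteq A_{j_1}$). Hence a representation $tf+i=a'+s'$ with $s'>0$ has $f\le s'<tf$, so $s'\in f\mathbb{Z}$ and $a'=cf+i$ with $cf\in A_{j_1}$. Then $cf+s'=tf+0$ contradicts uniqueness, so $tf+i\in A_{j_1}$.
\end{itemize}

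With this filled in, and with the induction on $n$ phrased so that some component sets (such as $B$) may be $\{0\}$, your argument becomes a complete, self-contained proof that avoids the citation. As written, the central step is missing.
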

\begin{proof}
Suppose
$$
 C_\ap{n} = C_{A_1} C_{A_2} \cdots C_{A_m}
$$
with finite non-empty sets $A_j \subset \mathbb{N}_0$, $A_j \neq \{0\}$ $(j \in \{1, \dots, m\})$.
As
\begin{align*}
 \prod_{j=1}^m C_{A_j}(x) &= \prod_{j=1}^m \sum_{\alpha_j \in A_j} x^{\alpha_j}
 = \sum_{(\alpha_1, \dots, \alpha_m) \in A_1 \times \cdots \times A_m} x^{\alpha_1 + \alpha_2 + \cdots + \alpha_m}
 = \sum_{\alpha \in \sum_{j=1}^n A_j} \mu_{\alpha} x^\alpha,
\end{align*}
where $\mu_\alpha$ is the multiplicity of $\alpha$ in forming the set sum
$\sum_{j=1}^n A_j$,
it is evident that $C_\ap{n} = \prod_{j=1}^m C_{A_j}$
if and only if
$\sum_{j=1}^m A_j = \ap{n}$ with $\mu_\alpha = 1$ for all $\alpha$, i.e.\ if and only if
$A_1, A_2, \dots, A_m$ form an $m$-part sum system of
cardinalities
$|A_j| = n_j$ $(j \in\{1, \dots, m\})$ such that $\prod_{j=1}^m n_j = n$.
By \cite[Theorem 6.7]{2017SASS}, there is a unique joint ordered
factorisation
$((j_1, f_1), (j_2, f_2), \dots, (j_L, f_L))$
of
$(n_1, n_2, \dots, n_m)$ such that
\begin{equation}
\label{eq:sss}
 A_j = \sum_{l : j_l = j} F_l\, \ap{f_l}
 \qquad (j \in\{1, \dots, m\}),
\end{equation}
where
$F_l := \prod_{s=1}^{l-1} f_s$,
and conversely every such joint ordered factorisation generates a sum system.
The corresponding polynomial factors are
\begin{align*}
 C_{A_j} &= \sum_{a \in A_j} x^a
 = \prod_{l : j_l = j} \sum_{a_l \in\ap{f_l}} x^{F_l\,a_l}
 = \prod_{l : j_l = j} \sum_{a_l \in\ap{f_l}} \left(x^{F_l}\right)^{a_l}
 = \prod_{l : j_l = j} C_\ap{f_l}(x^{F_l}).
\end{align*}
\end{proof}

The ``if'' part of the following statement was shown in \cite[Lemma 3]{C&M66}; we include a
proof for the reader's convenience.
\begin{lemma}
\label{lem:cirr}
Let $f, a$ be non-negative integers, $f > 1$. Then the polynomial
$C_\ap{f}(x^a)$ is {\it c}-irreducible if and only if $f$ is prime.
\end{lemma}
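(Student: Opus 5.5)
The plan is to treat the two directions separately. The ``only if'' direction will be an explicit factorisation. The ``if'' direction will be a counting argument obtained by evaluating at $x=1$. Throughout I take $a\ge 1$: for $a=0$ the expression $C_\ap{f}(x^0)=f$ is a constant rather than a \emph{c}-polynomial, so the statement is only meaningful for $a\ge1$.

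For the ``only if'' direction, suppose $f$ is composite, say $f=rs$ with $r,s>1$. The identity
$$C_\ap{rs}(y)=\frac{y^{rs}-1}{y-1}=\frac{y^{r}-1}{y-1}\cdot\frac{(y^{r})^{s}-1}{y^{r}-1}=C_\ap{r}(y)\,C_\ap{s}(y^{r})$$
is the two-index case of Theorem~\ref{thm:CM}, or equivalently of Lemma~\ref{lem:hls} applied to the joint ordered factorisation $((1,r),(2,s))$ of $(r,s)$. Substituting $y=x^a$ gives $C_\ap{f}(x^a)=C_\ap{r}(x^a)\,C_\ap{s}(x^{ar})$. Both factors are \emph{c}-polynomials with $r$ and $s$ terms respectively, and since $r,s\ge2$ neither is trivial. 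This is a \emph{c}-factorisation with two factors, so $C_\ap{f}(x^a)$ is not \emph{c}-irreducible.

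For the ``if'' direction, let $f=p$ be prime and suppose $C_\ap{p}(x^a)=C_{A_1}C_{A_2}\cdots C_{A_m}$ with each $A_j\subset\mathbb{N}_0$ finite and non-empty. I will show that every $A_j$ except one equals $\{0\}$, which contradicts non-triviality whenever $m\ge2$. The key observation is that $C_A(1)=|A|$ for any \emph{c}-polynomial. Evaluating both sides at $x=1$ therefore gives $\prod_{j=1}^m |A_j| = p$. Since $p$ is prime, all but one of the $|A_j|$ equal $1$, so each such $A_j$ is a singleton $\{c_j\}$ and $C_{A_j}=x^{c_j}$. Next compare lowest-degree terms. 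The product of polynomials with non-negative coefficients has lowest exponent $\sum_j \min A_j$, while the left-hand side has constant term $1$. Hence every $\min A_j=0$, and in particular every singleton is $\{0\}$, i.e.\ trivial. So any \emph{c}-factorisation of $C_\ap{p}(x^a)$ has at most one non-trivial factor, as required.

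I expect no serious obstacle in either direction. The only points to state carefully are the degenerate case $a=0$ and the lowest-degree step, where the absence of cancellation follows because all coefficients are non-negative. Note that the ``if'' direction does not need the sum-system machinery behind Lemma~\ref{lem:hls}, since the evaluation at $x=1$ already forces the cardinalities.
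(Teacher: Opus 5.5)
Your proof is correct. Your ``only if'' direction is the same telescoping factorisation the paper uses, but your ``if'' direction takes a different and more elementary route.

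\textbf{The paper's route.} It argues in two stages.
\begin{enumerate}
\item It shows that $C_\ap{f}$ itself is \emph{c}-irreducible for prime $f$. This goes through Lemma~\ref{lem:hls}, and hence through the sum-system structure theorem of \cite{2017SASS}: the only joint ordered factorisation arising from a prime is $((1,f))$.
\item It transfers this to $C_\ap{f}(x^a)$ by a dilation argument. In any factorisation $C_\ap{f}(x^a)=C_A C_B$ one has $0\in A\cap B$. Any exponent outside $a\mathbb{N}_0$ would therefore survive uncancelled, so $A=a\tilde A$, $B=a\tilde B$, and $C_\ap{f}=C_{\tilde A}C_{\tilde B}$.
\end{enumerate}

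\textbf{Your route.} You bypass both stages. Evaluating at $x=1$ gives $\prod_j|A_j|=p$, and comparing lowest-degree terms forces every singleton factor to be $\{0\}$. In effect, you isolate the one part of the sum-system correspondence that matters when $f$ is prime, namely the cardinality count.

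\textbf{What each buys.} Your argument is self-contained and handles all $a\ge1$ at once. It actually proves a stronger statement: any \emph{c}-polynomial with a prime number of terms and constant term $1$ is \emph{c}-irreducible. The paper's route keeps the argument inside its joint-ordered-factorisation framework. Its dilation step also records a structural fact valid for every $f$, not only primes: every \emph{c}-factorisation of $C_\ap{f}(x^a)$ comes from one of $C_\ap{f}$ under $x\mapsto x^a$. This lemma does not need that fact.

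Your caveat about $a=0$ is appropriate, since then $C_\ap{f}(x^a)$ is the constant $f$ and not a \emph{c}-polynomial.
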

\begin{proof}
If $f$ is not prime, so $f = f_1 f_2$ with integers $f_1, f_2 > 1$, then
$$
 C_\ap{f}(x^a) = \frac{x^{a f} - 1}{x^a - 1}
 = \frac{x^{a f_1 f_2} - 1}{x^{a f_1} - 1} \, \frac{x^{a f_1} - 1}{x^a - 1}
 = C_\ap{f_2}(x^{a f_1}) \, C_\ap{f_1}(x^a)
$$
is {\it c}-reducible.

Further, by Lemma \ref{lem:hls}
every {\it c}-factorisation of $C_\ap{f}$ corresponds to a joint ordered
factorisation arising from $f$; if $f$ is prime, this can only be
$((1, f))$, having a single factor. Therefore $C_\ap{f}$ is {\it c}-irreducible.
Finally, suppose
$C_\ap{f}(x^a) = C_A(x)\,C_B(x)$ with finite $A, B \subset \mathbb{N}_0$, for $f$ prime. Then $0 \in A$ and $0 \in B$ to produce the constant term $1$ in $C_\ap{f}(x^a)$.
If $k \in A$ and $k \notin a \mathbb{N}_0$, then $C_\ap{f}(x^a)$ will have the
term $x^k\,x^0 = x^k$, which cannot be cancelled by any other partial product
$x^\alpha\, x^\beta$ with $\alpha\in A$ and $\beta\in B$, as all coefficients
are non-negative. This gives a contradiction, so there is some $\tilde A \subset
\mathbb{N}_0$ such that $A = a \tilde A$. By analogous reasoning, there is some
$\tilde B \subset \mathbb{N}_0$ such that $B = a \tilde B$.
But then
 $C_\ap{f}(x^a) = C_{\tilde A}(x^a)\,C_{\tilde B}(x^a)$
and consequently $C_\ap{f} = C_{\tilde A}\,C_{\tilde B}$.
As $C_\ap{f}$ is {\it c}-irreducible, it follows that $\tilde A = \{0\}$ or
$\tilde B = \{0\}$ and hence $A = \{0\}$ or $B = \{0\}$, so
$C_\ap{f}(x^a)$ is also {\it c}-irreducible.
\end{proof}
\begin{proof}[Proof of Theorem \ref{thm:CM}]
The first statement follows by repeated application of the expansion used
in the first part of the proof of Lemma \ref{lem:cirr}
\begin{align*}
 C_\ap{p_1 p_2 \cdots p_{\Omega(n)}} &= \frac{x^{p_1 p_2 p_3 \cdots p_{\Omega(n)}} - 1}{x - 1}
 = \frac{x^{p_1} - 1}{x - 1}\,\frac{(x^{p_1})^{p_2 p_3 \cdots p_{\Omega(n)}} - 1}{x^{p_1} - 1}
\\
 &= \frac{x^{p_1} - 1}{x - 1}\,\frac{(x^{p_1})^{p_2} - 1}{x^{p_1} - 1}\,\frac{(x^{p_1 p_2})^{p_3 \cdots p_{\Omega(n)}} - 1}{x^{p_1 p_2} - 1}
\\
 &= \dots
 = \frac{x^{p_1} - 1}{x - 1}\,\frac{(x^{p_1})^{p_2} - 1}{x^{p_1} - 1}\,\frac{(x^{p_1 p_2})^{p_3} - 1}{x^{p_1 p_2} - 1} \cdots \frac{(x^{p_1 p_2 \cdots p_{\Omega(n) - 1}})^{p_{\Omega(n)}} - 1}{x^{p_1 p_2 \cdots p_{\Omega(n) - 1}} - 1}
\\
 &= C_\ap{p_1}(x)\,C_\ap{p_2}(x^{p_1})\,C_\ap{p_3}(x^{p_1 p_2}) \cdots C_\ap{p_{\Omega(n)}}(x^{p_1 p_2 \cdots p_{\Omega(n)-1}})
\end{align*}
and Lemma \ref{lem:cirr}.

For the second statement, consider a {\it c}-factorisation of $C_\ap{n}$.
By Lemma \ref{lem:hls}, there is a corresponding joint ordered factorisation
$((j_1, f_1), (j_2, f_2), \dots, (j_L, f_L))$ arising from $n$ and each of
the {\it c}-polynomial factors has the form
$\prod_{l : j_l = j} C_\ap{f_l}(x^{F_l})$, for some $j$. If the product has
more than one term, this polynomial is obviously {\it c}-reducible.
In conjunction with Lemma \ref{lem:cirr}, this shows that the {\it c}-polynomial factor is {\it c}-irreducible (if and) only if there is only one $l \in \{1, \dots, L\}$ such that $j_l = j$, and $f_l$ is prime.
The factor then takes the form $C_\ap{f_l}(x^{F_l})$.

For a factorisation of $C_\ap{n}$ into {\it c}-irreducible factors, the joint
ordered factorisation must have all $L~=~\Omega(n)$ indices distinct and all
factors $f_1, \dots, f_L$ prime.
Up to permutation of the primes, this joint ordered factorisation has the
form
$((1, p_1), (2, p_2), \dots, (\Omega(n), p_{\Omega(n)}))$
and gives rise
to the factorisation stated in the theorem.
\end{proof}
\begin{remark}
\label{rmk:CM}
In \cite{C&M66} the proof of the second statement of Theorem \ref{thm:CM}
relies on the following statement
\cite[Lemma 1]{C&M66}.

{\it
``Let
$$\frac{x^n - 1}{x - 1} = A(x) B(x)$$
where $A(x)$ and $B(x)$ are {\it c}-polynomials. Then either $A(x)$ or $B(x)$
is of the form ${(x^r - 1)/(x - 1)}$ where $r$ is a divisor of $n$.''
\/}

However, this statement is incorrect; in fact the connection between
{\it c}-factorisations and joint ordered factorisations set out in Lemma
\ref{lem:hls} above easily yields counterexamples. For example, the
joint ordered factorisation $((1,2),(2,3),(1,2))$ gives
$$
 \frac{x^{12}-1}{x-1} = C_\ap{12}(x) = (1 + x + x^6 + x^7) (1 + x^2 + x^4)
$$
with neither factor of the form claimed in the above statement; the joint
ordered factorisation $$((1, 2),(2, 3),(1, 2),(2, 2))$$ gives
$$
 \frac{x^{24}-1}{x-1} = C_\ap{24}(x) = (1 + x + x^6 + x^7) (1 + x^2 + x^4 + x^{12} + x^{14} + x^{16})
$$
with neither factor even of the form $(x^{ra}-1)/(x^a-1)$.
\end{remark}
As already observed in \cite{C&M66} and apparent from both the {\it\ c\/}-factorisation stated in Theorem \ref{thm:CM} and the corresponding joint ordered
factorisation arising from $n$ with all factors prime, the number of all
different factorisations of $C_\ap{n}$ into (necessarily $\Omega(n)$)
{\it c\/}-irreducible factors is equal, up to permutation of factors, to the
number of linear arrangements of all prime factors of $n$.
Thus, writing $n = p_1^{\alpha_1} p_2^{\alpha_2} \cdots p_{\omega(n)}^{\alpha_{\omega(n)}}$, where we take $p_1, \dots, p_{\omega(n)}$ to be distinct
prime factors and $p_{\omega(n)+1}, \dots, p_{\Omega(n)}$ to be repetitions of
these, we obtain the following.
\begin{corollary}
\label{cor:cirr}
Let $n = p_1^{\alpha_1} p_2^{\alpha_2} \cdots p_{\omega(n)}^{\alpha_{\omega(n)}}$ with distinct primes $p_1$, $p_2$, \dots, $p_{\omega(n)}$ and $\alpha_j \in \mathbb{N}$ $(j \in \{1, \dots, \omega(n)\})$. Then, up to permutation of factors,
$C_\ap{n}$ has
$$
 \begin{pmatrix} \Omega(n) \\ \alpha_1\ \alpha_2\ \dots\ \alpha_{\omega(n)} \end{pmatrix}
 = \frac{(\alpha_1 + \alpha_2 + \cdots + \alpha_{\omega(n)})!}{\alpha_1!\,\alpha_2!\,\cdots\,\alpha_{\omega(n)}!}
$$
different {\it c\/}-irreducible {\it c\/}-factorisations.
\end{corollary}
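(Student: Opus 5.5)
The approach is to count \emph{c}-irreducible \emph{c}-factorisations of $C_\ap{n}$, up to permutation of factors, by putting them in bijection with the distinct linear arrangements (words) of the multiset of prime factors of $n$. This multiset has $\alpha_j$ copies of $p_j$ for $j \in \{1,\dots,\omega(n)\}$, and the number of such words is the stated multinomial coefficient. The argument has two halves: first produce the map from words to factorisations, then show it is a bijection onto unordered factorisations.

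\textbf{Map from words to factorisations.} Given a word $(q_1,\dots,q_{\Omega(n)})$ in the primes of $n$, Theorem~\ref{thm:CM} yields the \emph{c}-factorisation
$$C_\ap{q_1}(x)\,C_\ap{q_2}(x^{q_1})\cdots C_\ap{q_{\Omega(n)}}(x^{q_1\cdots q_{\Omega(n)-1}})$$
into \emph{c}-irreducible factors, and every \emph{c}-irreducible \emph{c}-factorisation arises in this way. So the map is surjective onto unordered factorisations, and it remains to prove injectivity: distinct words must give distinct multisets of factors.

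\textbf{Injectivity.} Each factor $C_\ap{q_l}(x^{F_l})$, with $F_l=q_1\cdots q_{l-1}$, is determined by and determines the pair $(q_l,F_l)$. Indeed, its exponent set is $F_l\ap{q_l}$, whose cardinality is $q_l$ and whose smallest positive element is $F_l$. The values $F_1=1<F_2<\cdots<F_{\Omega(n)}$ are strictly increasing, so the factors are pairwise distinct. Given the unordered multiset of factors, we therefore recover the word by sorting the factors by $F_l$ and reading off the $q_l$ in that order. Hence distinct words give distinct unordered factorisations. An equivalent route is through Lemma~\ref{lem:hls}: permuting the factors only relabels the indices $j_l$, and all indices here are distinct, so the underlying sequence $(f_1,\dots,f_L)$, which is the word, is invariant.

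\textbf{Counting words.} It remains to count words in a multiset containing $\alpha_j$ copies of $p_j$. This is the standard multinomial coefficient $\Omega(n)!/(\alpha_1!\cdots\alpha_{\omega(n)}!)$.

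The only step needing care is injectivity, that is, making precise that ``up to permutation of factors'' identifies exactly those factorisations coming from the same word. This is handled by recovering the ordering from the strictly increasing $F_l$.
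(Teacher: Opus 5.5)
Your proposal is correct and follows essentially the paper's route. Via Theorem~\ref{thm:CM} (equivalently, Lemma~\ref{lem:hls} with all indices distinct and all factors prime), the $c$-irreducible $c$-factorisations of $C_\ap{n}$ up to order correspond to linear arrangements of the prime factors of $n$, and these are counted by the multinomial coefficient. The paper calls this correspondence apparent, whereas you check injectivity explicitly by recovering $(q_l,F_l)$ from each factor as its number of terms and its smallest positive exponent, which fills in the step the paper leaves unstated.
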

\section{{\it c\/}-factorisations and precise chromatic polynomial}
In Corollary \ref{cor:cirr} we gave the count of {\it c\/}-irreducible
{\it c\/}-factorisations of $C_\ap{n}$.
We now consider the question of how many different {\it c\/}-factorisations
there are without the constraint of having {\it c\/}-irreducible factors.
As noted in the beginning of the proof of Lemma \ref{lem:hls}, there is a
one-to-one relationship between ordered {\it c\/}-factorisations of $C_\ap{n}$ into
$m$ factors and $m$-part sum systems; by \cite[Theorem 6.7]{2017SASS} the latter
are in one-to-one relationship with the $m$-part joint ordered factorisations
arising from $n$.
Note that we are here considering ordered factorisations of $C_\ap{n}$,
corresponding to keeping the order of the component sets of the sum system
fixed. If we wish to count {\it c\/}-factorisations irrespective of the order
of factors, then we need to divide the count by $m!$, with $m$ the number of
factors; note that, due to their structure stated in Equation (\ref{eq:sss}),
no two component sets of a sum system are equal
and hence no two {\it c\/}-polynomial factors in a {\it c\/}-factorisation of
$C_\ap{n}$ are the same.

Let us at first consider fixed $m$. Then the counting of ordered
{\it c\/}-factorisations of $C_\ap{n}$ with $m$ factors is equivalent to counting all
$m$-part joint ordered factorisations arising from $n$.
In view of the definition of joint ordered factorisations, each of these
can be constructed by taking an ordered factorisation of $n$ into $L$ non-trivial factors
$f_1, f_2, \dots, f_L$
and then assigning them indices $j_1, j_2, \dots, j_L \in \{1, \dots, m\}$
under the constraints that $j_l \neq j_{l-1}$ for all $j \in \{2, \dots, L\}$
and that each value in $\{1, \dots, m\}$ is taken by at least one index.
These two processes are independent, apart from being linked by the
variable $L$, and can therefore be considered as two separate counting problems.
Note that only values $L \in \{m, \dots, \Omega(n)\}$ give rise to valid
joint ordered factorisations.

The first of these counting problems is solved by the non-trivial divisor
function $c_L$ that counts the number of different ordered factorisations of
an integer into $L$ factors greater than 1.
This arithmetic function already studied by MacMahon \cite{MacMah} can be expressed in the form
$$
c_L(n) = \sum_{k=0}^L(-1)^k\binom{L}{k}\prod_{j=1}^{\omega(n)}\binom{\alpha_j+L-k-1}{\alpha_j}
$$
where $n=p_1^{\alpha_1}\, p_2^{\alpha_2}\cdots p_{\omega(n)}^{\alpha_{\omega(n)}}$ with distinct primes $p_1, p_2, \dots, p_{\omega(n)}$
(see \cite[Theorem 1(b)]{LSDF&NSS}, where also its definition in terms of
Dirichlet convolution and generalisations are discussed).

The second counting problem of how many ways there are of giving a row of
$L$ objects labels of $m$ different types such that each type occurs at least
once and no adjacent objects have a label of the same type is clearly
equivalent to the counting of colourings of a path graph of $L$ vertices
with $m$ colours such that each colour is used at least once, under the
standard condition that connected vertices have different colours.
If the first condition of using every colour is omitted, this is the usual
graph colouring problem for a path graph, its answer being given by the
{\it chromatic polynomial\/}
$$
 P(L, m) = m\,(m-1)^{L-1};
$$
indeed, going through the vertices in turn from one end of the path graph
to the other, the first vertex can have any colour and each following
vertex can have any colour except that of the preceding vertex.
We introduce the additional constraint in the following concept.
\begin{definition}
Let $L, m \in \mathbb{N}$. The {\it precise chromatic polynomial\/}
$P^\#(L, m)$ counts the number of graph colourings of the path graph with
$L$ vertices using $m$ colours such that every colour is used at least once.
\end{definition}
The precise chromatic polynomial can be expressed in terms of the Stirling
numbers of the second kind,
$$
{L \brace m} := S(L, m) = \frac 1{m!} \sum_{k=0}^m (-1)^{m-k} {m \choose k} k^L
$$
(see \cite[Theorem 5.1.A]{Comtet}) as follows.
\begin{theorem}
\label{thm:pcp}
Let $L, m \in \mathbb{N}$. Then
\begin{equation}
\label{IDIDIT}
 P^\#(L, m) = m!\,{L-1 \brace m-1}.
\end{equation}
\end{theorem}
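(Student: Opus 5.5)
We prove the identity with an explicit bijection. It sends proper $m$-colourings of the path graph on $L$ vertices that use every colour to pairs (ordering of the colours, set partition of an $(L-1)$-element set into $m-1$ blocks). The base cases are immediate: for $m=1$ both sides equal $1$ when $L=1$ and $0$ when $L>1$, using ${0 \brace 0}=1$ and ${L-1 \brace 0}=0$ for $L>1$. We therefore assume $m\ge 2$.

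\emph{Step 1: transfer the colour-change structure to the edges.} Label the vertices $1,\dots,L$ and the edges $e_l=\{l,l+1\}$ for $l\in\{1,\dots,L-1\}$. Given a colouring $c$ using all $m$ colours, order the colours by their first appearance along the path. This gives a sequence $\gamma_1=c(1),\gamma_2,\dots,\gamma_m$, i.e.\ one of the $m!$ orderings of the colour set. Relabel the colours as $1,\dots,m$ according to this order. The relabelled colouring $\tilde c$ is then a ``restricted growth'' proper colouring: $\tilde c(1)=1$, each new colour is one more than the maximum so far, and adjacent vertices differ. Hence $P^\#(L,m)=m!\,R(L,m)$, where $R(L,m)$ counts restricted growth proper colourings with exactly $m$ colours. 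The decomposition $c\mapsto(\text{ordering},\tilde c)$ is clearly bijective.

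\emph{Step 2: show $R(L,m)={L-1 \brace m-1}$.} To each $\tilde c$ we assign a partition of the edge set $\{e_1,\dots,e_{L-1}\}$ into $m-1$ blocks. Put $e_l$ into block $\tilde c(l+1)-1$ if $\tilde c(l+1)\ge 2$. Edges with $\tilde c(l+1)=1$ must also be placed somewhere, so this direct assignment does not work, and a cleaner device is needed.

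Instead we use the classical shift map. Define $b_l:=\tilde c(l+1)-1$ when $\tilde c(l+1)<\tilde c(l)$ and $b_l:=\tilde c(l+1)-1$ otherwise; on its own this does not yet give a valid partition label. The cleanest version is the recursion check. Condition on the colour of the last vertex $L$. If vertex $L$ has a colour not used before, removing it gives a colouring counted by $R(L-1,m-1)$. Otherwise vertex $L$ takes one of the $m-1$ old colours different from that of vertex $L-1$. Hence
$$R(L,m)=R(L-1,m-1)+(m-1)R(L-1,m),$$
which is exactly the recurrence ${L-1 \brace m-1}={L-2 \brace m-2}+(m-1){L-2 \brace m-1}$. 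Matching the initial values $R(1,1)=1$ and $R(L,1)=0$ for $L>1$ then gives $R(L,m)={L-1 \brace m-1}$ by induction on $L$. Combining this with Step 1 yields the identity (\ref{IDIDIT}).

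The main obstacle is Step 2, namely making the correspondence with Stirling numbers clean. The recursion argument avoids constructing an explicit bijection. The point that needs care is why the count of old colours is $m-1$ and not $m$: the forbidden colour is that of vertex $L-1$, which is always one of the old colours. One could even skip Step 1 and apply the recursion to $P^\#$ directly, since $P^\#(L,m)=m\,P^\#(L-1,m-1)+(m-1)P^\#(L-1,m)$, where the factor $m$ chooses which colour appears only at the end. Dividing by $m!$ reduces this to the Stirling recurrence. We will present this shorter version.
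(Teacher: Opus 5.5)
Your proof is correct, but it takes a different route from the paper. You remove the end vertex and get a recurrence in the number of vertices, $P^\#(L,m)=m\,P^\#(L-1,m-1)+(m-1)P^\#(L-1,m)$. Hence $P^\#(L,m)/m!$ obeys the triangular Stirling recurrence ${L-1 \brace m-1}={L-2 \brace m-2}+(m-1){L-2 \brace m-1}$, and induction on $L$ finishes. For that induction you need the whole initial row $P^\#(1,m)=\delta_{m,1}=m!\,{0 \brace m-1}$, not only the values at $m=1$ that you list.

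The paper instead fixes $L$ and recurses in $m$. Sorting all proper colourings with at most $m$ colours by the exact set of colours used gives $m(m-1)^{L-1}=\sum_{n=1}^{m}{m \choose n}P^\#(L,n)$. The expansion $x^k=\sum_n {k \brace n}x^{\underline{n}}$ at $x=m-1$, $k=L-1$ then shows that $m!\,{L-1 \brace m-1}$ satisfies the same relation.

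Your argument is more elementary and self-contained, since it needs only the defining recurrence of the Stirling numbers. Your first-appearance normalisation in Step~1 also explains directly where the factor $m!$ comes from, and why ${L-1 \brace m-1}$ counts colourings up to permutation of colours (Remark~\ref{rmk:pcp}(b)). On the other hand, it depends on the path having a pendant vertex whose deletion leaves a path. The paper's first step holds for every graph and identifies $P^\#(G,n)/n!$ as the coefficients of the chromatic polynomial in the falling-factorial basis. So that method applies whenever the chromatic polynomial is known; for the path it reduces to $x\,(x-1)^{L-1}=\sum_{n} n!\,{L-1 \brace n-1}{x \choose n}$.

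When you write it up, delete the abandoned edge-labelling passage in Step~2; your two cases defining $b_l$ coincide, and the passage only obscures an otherwise clean argument.
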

\begin{remark}
\label{rmk:pcp}
(a)
From its definition, it is clear that $P^\#(L, m) = 0$ if $L < m$ as it is
impossible to use all $m$ colours on only $L$ vertices; this case is covered
in Equation (\ref{IDIDIT}) because
the Stirling number then vanishes.

(b)
The precise chromatic polynomial counts all path graph colourings using
exactly $m$ fixed colours. If the colours are treated as interchangeable,
so any two colourings that differ only be a permutation of colours are
identified, then by Equation (\ref{IDIDIT}), the number of (equivalence
classes of) colourings is ${L-1 \brace m-1}$.

(c)
The above outcome agrees with Munagi's result for a combinatorially equivalent
problem \cite[Remark 2.2]{Munagi} that was further extended in \cite[Section 3]{Duncan2009}.
\end{remark}
\begin{proof}[Proof of Theorem \ref{thm:pcp}]
We first note that
by Remark \ref{rmk:pcp}(a),
$P^\#(1, m) = \delta_{m, 1}$ in accordance with Equation (\ref{IDIDIT}).
Hence we assume $L > 1$ without loss of generality in the following.

Since general path graph colourings with (up to) $m$ colours can be classified
into path graph colourings where exactly $n$ colours are used, for $n\in\{1, \dots, m\}$, the chromatic polynomial and the precise chromatic polynomial are
related as
$$
P(L, m) = \sum_{n=1}^m P^\#(L, n){m \choose n}.
$$
This gives the recurrence relation for the precise chromatic polynomial
\begin{equation}
\label{eq:pcprecur}
P^\#(L, m) = m\,(m-1)^{L-1} - \sum_{n=1}^{m-1} P^\#(L, n) {m \choose n}
\qquad (m \in \mathbb{N}).
\end{equation}
Note that this relation already produces the initial value
$P^\#(L, 1) = 0$
which is correct as no path graph with more than one vertex has a valid
colouring with only one colour.

On the other hand, there is the following identity involving Stirling numbers
of the second kind and falling factorials
$x^{\underline{n}} = \prod_{j=0}^{n-1} (x - j)$,
$$
x^k = \sum_{n=0}^k {k \brace n}\,x^{\underline{n}}
\qquad (k \in \mathbb{N}_0)
$$
(see \cite[Equation (6.10)]{GKP}).
Hence
$$
 (m-1)^{L-1} = \sum_{n=0}^{L-1} {L-1 \brace n} (m-1)^{\underline{n}}
 = \sum_{n=1}^L {L-1 \brace n-1} (m-1)^{\underline{n-1}}
 = \sum_{n=1}^L {L-1 \brace n-1} \frac{(m-1)!}{(m-n)!}
$$
and thus
$$
 m\,(m-1)^{L-1} = \sum_{n=1}^L {L-1 \brace n-1} \frac{m!}{(m-n)!}
 = \sum_{n=1}^L n! {L-1 \brace n-1} {m \choose n}
 = \sum_{n=1}^m n! {L-1 \brace n-1} {m \choose n},
$$
using in the last step that
${L-1 \brace n-1} = 0$ if $n > L$ and ${m \choose n} = 0$
if $n > m$.
Hence we obtain
\begin{equation}
\label{eq:stirecur}
m! {L-1 \brace m-1} = m\,(m-1)^{L-1} - \sum_{n=1}^{m-1} n! {L-1 \brace n-1}{m \choose n}
\qquad (m \in \mathbb{N}).
\end{equation}
Comparing Equations (\ref{eq:pcprecur}) and (\ref{eq:stirecur}), we see that
$P^\#(L, m)$ and $m! {L-1 \brace m-1}$ satisfy the same recurrence relation,
and since for $m=1$ we have $1! {L-1 \brace 0} = 0 = P^\#(L, 1)$,
Equation (\ref{IDIDIT}) follows.
\end{proof}
Collecting the results in this section, we obtain the following counting
theorem.
\begin{theorem}
\label{thm:cfcnt}
Let $m,n \in \mathbb{N}$, $n > 1$. Then the number of different {\it c\/}-factorisations of $C_\ap{n}$ into $m$ non-trivial factors, irrespective of the order of factors, is equal to
\begin{equation}
\label{eq:cfcnt}
\tilde{\cal N}_m(n) = \sum_{L=m}^{\Omega(n)} {L-1 \brace m-1} c_L(n)
\end{equation}
and the number of all different {\it c\/}-factorisation of $C_\ap{n}$,
irrespective of the order of factors, is equal to
$$
\tilde{\cal N}(n) = \sum_{m=1}^{\Omega(n)} \sum_{L=m}^{\Omega(n)} {L-1 \brace m-1} c_L(n).
$$
\end{theorem}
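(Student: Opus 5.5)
The plan is to count ordered $c$-factorisations first and then pass to unordered ones. By Lemma \ref{lem:hls} (equivalently, the remarks opening this section), ordered $c$-factorisations of $C_\ap{n}$ into $m$ non-trivial factors are in bijection with $m$-part joint ordered factorisations arising from $n$.

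Each such joint ordered factorisation $((j_1,f_1),\dots,(j_L,f_L))$ is determined by two independent pieces of data. The first is the ordered factorisation $(f_1,\dots,f_L)$ of $n$ into $L$ factors greater than $1$. The second is the index sequence $(j_1,\dots,j_L)$, which is a proper colouring of the path graph on $L$ vertices with colours $\{1,\dots,m\}$ in which every colour is used.

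Conversely, I will check that any such pair yields a joint ordered factorisation arising from $n$, namely of the tuple with $n_j=\prod_{l:j_l=j}f_l$. Each $n_j>1$ because colour $j$ is used and all $f_l>1$, and $\prod_j n_j=n$. This tuple is recovered from the pair, so the correspondence is a bijection onto the set of all $m$-part joint ordered factorisations arising from $n$. Hence the number of ordered factorisations with a given $L$ is $c_L(n)\,P^\#(L,m)$.

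Next I determine the range of $L$. Since every $f_l\ge 2$, we need $L\le\Omega(n)$, and $c_L(n)=0$ for $L>\Omega(n)$. Since all $m$ colours must appear, we need $L\ge m$; this is also covered by Remark \ref{rmk:pcp}(a). Summing over $L$ and applying Theorem \ref{thm:pcp}, the number of ordered factorisations is
$$\sum_{L=m}^{\Omega(n)} m!\,{L-1 \brace m-1}\,c_L(n).$$

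To pass to unordered factorisations I will use the observation made earlier in this section, based on Equation (\ref{eq:sss}), that the $m$ factors are pairwise distinct. The symmetric group $S_m$ therefore acts freely on ordered factorisations by permuting factors, so each unordered factorisation corresponds to exactly $m!$ ordered ones. Dividing by $m!$ gives $\tilde{\cal N}_m(n)$ as claimed.

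The main point needing care is this distinctness claim, so I will verify it directly. Two component sets $A_j$ and $A_{j'}$ cannot coincide: since the sum system has $\mu_\alpha=1$, we have $A_j\cap A_{j'}=\{0\}$, while each $A_j\neq\{0\}$.

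Finally, the total count $\tilde{\cal N}(n)$ is obtained by summing $\tilde{\cal N}_m(n)$ over $m$. Only $1\le m\le\Omega(n)$ contribute, because any $m$-part joint ordered factorisation needs $L\ge m$ factors greater than $1$, which forces $m\le\Omega(n)$.
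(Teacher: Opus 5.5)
Your proposal is correct and follows essentially the same route as the paper. Both use the bijection with $m$-part joint ordered factorisations arising from $n$, split the count into $c_L(n)$ ordered factorisations times $P^\#(L,m)=m!\,{L-1\brace m-1}$ index assignments, and divide by $m!$ because the factors are distinct. Your direct check of distinctness via $A_j\cap A_{j'}=\{0\}$, which implicitly uses that $0\in A_k$ for every $k$ (forced by the constant term $1$), is a slightly more self-contained justification than the paper's appeal to the structure in Equation (\ref{eq:sss}), but it does not change the argument.
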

\begin{remark}
(a)
The above considerations also give a formula for the number of all $m$-part
joint ordered factorisations arising from $n$,
$$
{\cal N}_m(n) = \sum_{L=m}^{\Omega(n)} m! {L-1 \brace m-1} c_L(n)
$$
(which is also the number of ordered {\it c\/}-factorisations of $C_\ap{n}$
into $m$ non-trivial factors).
Interestingly, for the same counting problem there is also the formula
\begin{equation}
\label{eq:Nm}
{\cal N}_m(n) = \sum_{L=0}^{\Omega(n)} m! {L \brace m} c_L^{(-L)}(n)
\end{equation}
(see \cite[Theorem 1.5]{L2025MF&DF}), where $c_L^{(-L)}$ is an associated
divisor function first introduced in \cite{HHLS} that can be interpreted as
giving, up to a sign $(-1)^{\Omega(n)+L}$, the number of ordered factorisations
into $L$ non-trivial, square-free factors (see \cite{LSDF&NSS}).
These two formulae bear a superficial resemblance but with clear differences.
Nevertheless, 
the two expressions can be reconciled (see \cite[Theorem 4.1]{LSfut}). 

(b)
The formula of Equation (\ref{eq:Nm}) arises by summation, over all
ordered factorisations
$n = n_1\,n_2\cdots n_m$ with factors greater than 1, of the number of
joint ordered factorisations of $(n_1, n_2, \dots, n_m)$, which is given by
\cite[Theorem 4]{LSDF&NSS}
\begin{equation}\label{eq:Ntup}
{\cal N}_{(n_1, n_2, \dots, n_m)} = \sum_{l_1=1}^\infty \sum_{l_2=1}^\infty \cdots \sum_{l_m=1}^\infty {l_1+l_2+\cdots+l_m \choose l_1\ l_2\ \dots\ l_m}
\prod_{j=1}^m c_{l_j}^{(-l_j)}(n_j).
\end{equation}
The proof of this formula involves a more intricate combinatorial problem
that can be stated as counting the path graph colourings applying
colour $j$ on exactly $n_j$ vertices, for all $j \in \{1, \dots, m\}$.

(c)
In Section 3 of \cite{C&M66}, Carlitz and Moser consider the number of
{\it c\/}-factorisations
$C_\ap{n}(x) = A(x)\,B(x)$,
irrespective of order,
and state it as
\begin{equation}\label{eq:CMcnt}
 \sum_{r=0}^{\Omega(n)} c_r(n)
\end{equation}
(using the notation $T_r'$ for what we call $c_r$).
This differs by $c_1(n) = 1$ from Equation (\ref{eq:cfcnt}) for $m=2$, as $c_0(n) = 0$ for
$n > 1$ and ${L-1 \brace 1} = 1$ for all $L \ge 2$.
However, the additional term $c_1(n)$ would correspond to not factorising $n$
at all, which makes one of the factors $A(x)$ or $B(x)$ trivial, as
clearly not intended in \cite{C&M66}.
For example, consider $n = 6$; there are only two {\it c\/}-factorisations
with exactly two factors (not counting permutation of factors)
$$
C_6(x) = (x^2 + x + 1) (x^3 + 1) = (x^4 + x^2 + 1) (x + 1)
$$
in agreement with Equation (\ref{eq:cfcnt}),
$\tilde{\cal N}_2(6) = \sum_{L=2}^2 {1 \brace 1} c_2(6) = c_2(6) = 2$,
not $3$ as suggested by Formula (\ref{eq:CMcnt}).
\end{remark}

\section{Conditioned joint ordered factorisations}
In this section we extend the counting arguments for joint ordered
factorisations arising from an integer $n > 1$ to joint ordered factorisations
with additional constraints.
We consider two types of constraints, factorisation into prime factors,
motivated by Theorem \ref{thm:CM}, and factorisation in square-free factors.
\begin{definition}
We call joint ordered factorisations where all factors are primes
{\it maximal joint ordered factorisations\/}.
\end{definition}
To count the number of $m$-part maximal joint ordered factorisations arising
from $n$, we consider, as in the proof of Theorem \ref{thm:cfcnt}, the two
separate problems of counting all different ordered factorisations of $n$
into primes, and of counting all different ways of assigning indices from
$\{1, \dots, m\}$, taking all $m$ values, to the ordered list of prime factors.
The first problem is answered in Corollary \ref{cor:cirr} above, the second by
the precise chromatic polynomial $P^\#(\Omega(n), m)$ in Theorem \ref{thm:pcp}. Combining these
results, we obtain the following.
\begin{theorem}
Let $m, n\in\mathbb{N}$, $n > 1$.
Then the number of $m$-part maximal joint ordered factorisations arising from
$n$
is
$$
{\cal N}_m^{\rm max}(n) = {\Omega(n) \choose \alpha_1\ \alpha_2\ \dots\ \alpha_{\omega(n)}} m! {\Omega(n) - 1 \brace m-1}
$$
and the number of all maximal joint ordered factorisations arising from $m$ is
$$
{\cal N}^{\rm max}(n) = {\Omega(n) \choose \alpha_1\ \alpha_2\ \dots\ \alpha_{\omega(n)}} \sum_{m=1}^{\Omega(n)} m! {\Omega(n) - 1 \brace m-1},
$$
where $\alpha_1, \dots, \alpha_{\omega(n)}$
in the multinomial coefficient
arise from the prime decomposition of $n = p_1^{\alpha_1}\, \cdots p_{\omega(n)}^{\alpha_{\omega(n)}}$.
\end{theorem}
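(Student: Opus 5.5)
The plan is to set up an explicit bijection between $m$-part maximal joint ordered factorisations arising from $n$ and pairs consisting of (i) an ordered factorisation of $n$ into primes and (ii) a precise colouring of the path graph on $\Omega(n)$ vertices with $m$ colours. Everything then reduces to Corollary \ref{cor:cirr} and Theorem \ref{thm:pcp}.

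Given an $m$-part maximal joint ordered factorisation $((j_1,f_1),\dots,(j_L,f_L))$ of some tuple $(n_1,\dots,n_m)$ with $\prod_j n_j = n$, I would first observe the following. Since every $f_l$ is prime and
$$\prod_{l=1}^L f_l=\prod_{j=1}^m\prod_{l:j_l=j}f_l=\prod_{j=1}^m n_j=n,$$
unique factorisation forces $L=\Omega(n)$. Moreover $(f_1,\dots,f_L)$ is an ordered arrangement of the prime factors of $n$ with multiplicity. The index sequence $(j_1,\dots,j_L)$ satisfies $j_l\neq j_{l-1}$. It also takes every value in $\{1,\dots,m\}$, since each $n_j>1$ must receive at least one factor. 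Hence it is a precise colouring of the path graph with $\Omega(n)$ vertices and $m$ colours.

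Conversely, take any arrangement $(f_1,\dots,f_{\Omega(n)})$ of the prime factors of $n$ together with any precise colouring $(j_l)$. Define $n_j:=\prod_{l:j_l=j}f_l$. Then $n_j>1$ because every colour is used, and $\prod_j n_j=n$. So the pair yields an $m$-part maximal joint ordered factorisation arising from $n$. The tuple $(n_1,\dots,n_m)$ is determined by the joint ordered factorisation itself, so the two maps are mutually inverse. In particular, summing over tuples with product $n$ introduces no overcounting.

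Counting then gives the first formula. The number of ordered prime arrangements is the multinomial coefficient from Corollary \ref{cor:cirr}, i.e.\ the number of linear arrangements of the multiset of primes. The number of precise colourings is $P^\#(\Omega(n),m)=m!\,{\Omega(n)-1\brace m-1}$ by Theorem \ref{thm:pcp}. The product of these two counts is ${\cal N}_m^{\rm max}(n)$.

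For the total count ${\cal N}^{\rm max}(n)$, I sum over $m$. Terms with $m>\Omega(n)$ vanish by Remark \ref{rmk:pcp}(a), so the sum runs over $1\le m\le\Omega(n)$, which gives the second formula. (The statement's ``arising from $m$'' should read ``arising from $n$''.)

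The only delicate step is the bijection itself: checking that $L$ is forced to equal $\Omega(n)$, and that the requirement $n_j>1$ corresponds exactly to the surjectivity of the colouring. The remaining steps are direct citations.
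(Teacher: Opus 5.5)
Your proposal is correct and follows the same route as the paper. The paper also splits an $m$-part maximal joint ordered factorisation into two independent pieces: an ordered arrangement of the $\Omega(n)$ prime factors of $n$, counted by the multinomial coefficient of Corollary~\ref{cor:cirr}, and an index assignment using all $m$ values with no two neighbours equal, counted by $P^\#(\Omega(n),m)=m!\,{\Omega(n)-1 \brace m-1}$ from Theorem~\ref{thm:pcp}, followed by summation over $m$. Your explicit checks go beyond the paper's brief sketch: you verify that $L=\Omega(n)$ is forced, that $n_j>1$ corresponds exactly to surjectivity of the colouring, and that the tuple $(n_1,\dots,n_m)$ is recovered from the factorisation, so there is no overcounting.
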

Noting the relationship between the Bell numbers and the Stirling numbers of the second kind
\cite[Equation 5.4.{[4a]}]{Comtet}
$$
B_j = \sum_{k=0}^j {j \brace k} \qquad (j \in \mathbb{N}_0)
$$
yields the following variant where we identify joint ordered factorisations that differ only by a
permutation of the index values in $\{1, \dots, m\}$.
\begin{corollary}
Let $m, n \in\mathbb{N}$, $n > 1$. Then the number of $m$-part maximal joint
ordered factorisations arising from $n$, counted irrespective of the order of
parts, is
$$
\tilde{\cal N}_m^{\rm max}(n) = {\Omega(n) \choose \alpha_1\ \alpha_2\ \dots\ \alpha_{\omega(n)}} {\Omega(n) - 1 \brace m - 1}
$$
and the number of all maximal joint ordered factorisations arising from $n$,
counted irrespective of the order of parts, is
$$
\tilde{\cal N}^{\rm max}(n) = {\Omega(n) \choose \alpha_1\ \alpha_2\ \dots\ \alpha_{\omega(n)}} B_{\Omega(n) - 1},
$$
where $\alpha_1, \dots, \alpha_{\omega(n)}$
in the multinomial coefficient
arise from the prime decomposition of $n = p_1^{\alpha_1}\, \cdots p_{\omega(n)}^{\alpha_{\omega(n)}}$.
\end{corollary}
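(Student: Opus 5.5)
The plan is to derive both counts directly from the preceding theorem on ${\cal N}_m^{\rm max}(n)$ by quotienting out the action of the symmetric group $S_m$ on index labels, and then to sum over $m$.

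\textbf{Step 1: the action is free.} The group $S_m$ acts on $m$-part maximal joint ordered factorisations arising from $n$ by relabelling: a permutation $\sigma$ sends $((j_1,f_1),\dots,(j_L,f_L))$ to $((\sigma(j_1),f_1),\dots,(\sigma(j_L),f_L))$. This again satisfies the defining conditions: adjacent indices stay distinct, all $m$ index values are still used, and the product condition merely renumbers the tuple $(n_1,\dots,n_m)$, whose product is still $n$.

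Suppose $\sigma$ fixes such a joint ordered factorisation. Then $\sigma(j_l)=j_l$ for every $l$. Since every value in $\{1,\dots,m\}$ occurs among the indices, $\sigma$ must be the identity. So every orbit has exactly $m!$ elements. This is the same observation as in the paper's remark that no two factors of a $c$-factorisation coincide.

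\textbf{Step 2: the count for fixed $m$.} By Step 1, the number of orbits is ${\cal N}_m^{\rm max}(n)/m!$. Substituting the formula from the preceding theorem gives
$$\tilde{\cal N}_m^{\rm max}(n)={\Omega(n) \choose \alpha_1\ \alpha_2\ \dots\ \alpha_{\omega(n)}}{\Omega(n)-1 \brace m-1}.$$
Equivalently, one can use Remark \ref{rmk:pcp}(b): colourings up to permutation of colours number ${L-1 \brace m-1}$, with $L=\Omega(n)$ prime factors, and this is multiplied by the number of ordered prime factorisations from Corollary \ref{cor:cirr}.

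\textbf{Step 3: summing over $m$.} The multinomial coefficient does not depend on $m$, so
$$\tilde{\cal N}^{\rm max}(n)={\Omega(n) \choose \alpha_1\ \alpha_2\ \dots\ \alpha_{\omega(n)}}\sum_{m=1}^{\Omega(n)}{\Omega(n)-1 \brace m-1}={\Omega(n) \choose \alpha_1\ \alpha_2\ \dots\ \alpha_{\omega(n)}}\sum_{k=0}^{\Omega(n)-1}{\Omega(n)-1 \brace k}.$$
By the Bell number identity quoted before the corollary, the last sum equals $B_{\Omega(n)-1}$.

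Two boundary points need checking. First, values $m>\Omega(n)$ contribute nothing, because the Stirling number vanishes, as in Remark \ref{rmk:pcp}(a). Second, the term $k=0$ is nonzero only when $\Omega(n)=1$, where ${0 \brace 0}=1$ correctly accounts for the single one-part factorisation.

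\textbf{Main obstacle.} The only real point is justifying that the relabelling action is free, which is what makes division by $m!$ legitimate. This follows from the requirement that all $m$ index values appear.
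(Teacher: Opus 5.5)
Your proposal is correct and follows essentially the same route as the paper: divide ${\cal N}_m^{\rm max}(n)$ by $m!$ to identify joint ordered factorisations that differ by a relabelling of index values (cf.\ Remark~\ref{rmk:pcp}(b)), then sum over $m$ using $B_j=\sum_{k=0}^j {j \brace k}$. Your explicit check that the relabelling action of $S_m$ is free, because every index value must occur, justifies the division by $m!$, which the paper leaves implicit.
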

\begin{definition}
We call joint ordered factorisations where all factors are square-free
{\it square-free joint ordered factorisations\/}.
\end{definition}
For the counting of the number of $m$-part square-free joint ordered
factorisations arising from $n$, we use the fact that, with the associated divisor
function $c_L^{(-L)}$, which can be calculated via the formula
\cite[Theorem 1(b)]{LSDF&NSS}
$$
c_L^{(-L)}(n) = \sum_{k=0}^L (-1)^k {L\choose k} \prod_{j=1}^{\omega(n)} {\alpha_j - k - 1 \choose \alpha_j},
$$
where $n = p_1^{\alpha_1}\,p_2^{\alpha_2} \cdots p_{\omega(n)}^{\alpha_{\omega(n)}}$ with distinct primes $p_1, \dots, p_{\omega(n)}$,
the number of ordered factorisations of $n$ into square-free non-trivial factors
is given by $|c_L^{(-L)}(n)|$.
The number of different ways of assigning indices from $\{1, \dots, m\}$, not
omitting any value, to the ordered list of factors is again counted by the
precise chromatic polynomial $P^\#(L, m)$ in Theorem \ref{thm:pcp}, and we
obtain the following.
\begin{theorem}\label{thm:sqf}
Let $m, n \in\mathbb{N}$, $n > 1$. Then the number of different $m$-part
square-free joint ordered factorisations arising from $n$ is
$$
{\cal N}_m^{\boxtimes}(n) = \sum_{L=m}^{\Omega(n)} m! {L-1 \brace m-1} \,|c_L^{(-L)}(n)|
$$
and the number of all different square-free joint ordered factorisations
arising from $n$ is
$$
{\cal N}^{\boxtimes}(n) = \sum_{L=1}^{\Omega(n)} \sum_{m=1}^{L} m! {L-1 \brace m-1} \,|c_L^{(-L)}(n)|.
$$
\end{theorem}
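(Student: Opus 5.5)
The count splits into two independent choices, exactly as in the proof of Theorem \ref{thm:cfcnt}. An $m$-part square-free joint ordered factorisation arising from $n$ is a sequence $((j_1,f_1),\dots,(j_L,f_L))$. It is determined by two pieces of data. The first is the ordered list of factors $(f_1,\dots,f_L)$: all factors are square-free and greater than $1$, and $\prod_l f_l = n$, since the $n_j$ multiply to $n$. The second is the index sequence $(j_1,\dots,j_L)$, which must satisfy $j_l\neq j_{l-1}$ and take every value in $\{1,\dots,m\}$. Every value is taken because each $n_j>1$, so each index carries at least one factor.

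Conversely, take any such list of factors and any admissible index sequence of the same length $L$, and set $n_j:=\prod_{l:j_l=j} f_l$. Then $n_j>1$, the product of the $n_j$ is $n$, and the pair is a valid square-free joint ordered factorisation of $(n_1,\dots,n_m)$. So we have a bijection, for each fixed $L$, between $m$-part square-free joint ordered factorisations with $L$ factors and pairs (factor list, index sequence).

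\emph{Counting factor lists.} For fixed $L$, the number of ordered factorisations of $n$ into $L$ non-trivial square-free factors is $|c_L^{(-L)}(n)|$, by the cited interpretation of $c_L^{(-L)}$. This count vanishes for $L>\Omega(n)$, since each non-trivial factor uses at least one prime, so $L$ ranges over $\{1,\dots,\Omega(n)\}$.

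\emph{Counting index sequences.} These are precisely the path graph colourings on $L$ vertices using all $m$ colours. By Theorem \ref{thm:pcp} there are $P^\#(L,m)=m!{L-1 \brace m-1}$ of them. This vanishes for $L<m$ by Remark \ref{rmk:pcp}(a), so the sum can start at $L=m$.

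Multiplying the two counts for each $L$ and summing over $L\in\{m,\dots,\Omega(n)\}$ gives the formula for ${\cal N}_m^{\boxtimes}(n)$.

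For the total ${\cal N}^{\boxtimes}(n)$, sum over $m$. Since at least one factor is needed, $m\ge1$, and $m\le L\le\Omega(n)$. Exchanging the order of summation turns $\sum_{m=1}^{\Omega(n)}\sum_{L=m}^{\Omega(n)}$ into $\sum_{L=1}^{\Omega(n)}\sum_{m=1}^{L}$, which is the stated expression.

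The only delicate point is the correctness of the bijection. One has to check that the tuple $(n_1,\dots,n_m)$ is recovered from the data, and that different tuples with the same product $n$ are all accounted for exactly once. This holds because the definition of a joint ordered factorisation arising from $n$ ranges over all tuples with product $n$, and the tuple is determined by the sequence itself. Beyond that, the argument rests on the quoted fact that $|c_L^{(-L)}(n)|$ counts square-free ordered factorisations. That fact is taken from \cite{LSDF&NSS} and is not reproved here.
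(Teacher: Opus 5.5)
Your proposal is correct and follows essentially the same route as the paper: you count the ordered square-free factor lists by $|c_L^{(-L)}(n)|$, count the index sequences using all $m$ values by $P^\#(L,m)=m!\,{L-1 \brace m-1}$, and sum over $L$, then over $m$ after exchanging the order of summation. Your explicit check of the bijection is a detail the paper leaves implicit, in particular that every index value must occur because each $n_j>1$.
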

Identifying joint ordered factorisations that differ only by a permutation of
the index values in $\{1, \dots, m\}$, we find the following variant.
\begin{corollary}\label{cor:sqf}
Let $m, n \in\mathbb{N}$, $n > 1$. Then the number of $m$-part square-free
joint ordered factorisations arising from $n$, counted irrespective of the
order of parts, is
$$
\tilde{\cal N}_m^\boxtimes(n) = \sum_{L=m}^{\Omega(n)} {L-1 \brace m-1}\,|c_L^{(-L)}(n)|
$$
and the number of all square-free joint ordered factorisations arising from $n$,
counted irrespective of the number of parts, is
$$
\tilde{\cal N}^\boxtimes(n) = \sum_{L=1}^{\Omega(n)} B_{L-1}\,|c_L^{(-L)}(n)|.
$$
\end{corollary}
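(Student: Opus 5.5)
The plan is to derive Corollary \ref{cor:sqf} directly from Theorem \ref{thm:sqf}. The only work is to show that identifying joint ordered factorisations which differ by a permutation of index values amounts to dividing each $m$-part count by $m!$.

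The first step is to look at the action of the symmetric group $S_m$ on $m$-part square-free joint ordered factorisations arising from $n$. A permutation $\sigma$ acts by relabelling indices, $(j_l,f_l)\mapsto(\sigma(j_l),f_l)$. This map preserves every defining condition:
\begin{itemize}
\item consecutive indices stay distinct;
\item every index value is still taken;
\item all factors remain square-free;
\item the product condition is carried from $(n_1,\dots,n_m)$ to the permuted tuple, which has the same product $n$.
\end{itemize}
So the action is well defined on the set counted by ${\cal N}_m^{\boxtimes}(n)$.

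The second step, which is the only point needing an argument, is to show the action is free. The factors $f_1,\dots,f_L$ form a fixed ordered list. Every index value $j\in\{1,\dots,m\}$ occurs at some position $l$, and $\sigma$ fixes the factorisation only if $\sigma(j_l)=j_l$ for all $l$. Hence $\sigma$ fixes every value in $\{1,\dots,m\}$, so $\sigma=\mathrm{id}$. This is the same observation as the earlier one that no two component sets of a sum system coincide, and here it is immediate from the fact that all colours are used. Every orbit therefore has exactly $m!$ elements. Dividing the formula of Theorem \ref{thm:sqf} by $m!$ gives
$$\tilde{\cal N}_m^\boxtimes(n)=\sum_{L=m}^{\Omega(n)} {L-1 \brace m-1}|c_L^{(-L)}(n)|.$$
Equivalently, one can argue termwise via Remark \ref{rmk:pcp}(b): for a fixed ordered list of $L$ factors, the colourings up to colour permutation number ${L-1\brace m-1}$.

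The third step sums over $m$ and exchanges the order of summation, since $m\le L\le\Omega(n)$:
$$\tilde{\cal N}^\boxtimes(n)=\sum_{L=1}^{\Omega(n)}|c_L^{(-L)}(n)|\sum_{m=1}^{L}{L-1\brace m-1}=\sum_{L=1}^{\Omega(n)}|c_L^{(-L)}(n)|\sum_{k=0}^{L-1}{L-1\brace k}.$$
The inner sum equals $B_{L-1}$ by the Bell number identity quoted before the preceding corollary. No step is a real obstacle; the only care needed is the freeness argument and keeping the summation limits consistent when the order of summation is exchanged.
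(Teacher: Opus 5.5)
Your proposal is correct and follows the same route as the paper. The paper obtains the corollary from Theorem \ref{thm:sqf} by identifying joint ordered factorisations that differ by a permutation of index values, i.e.\ dividing by $m!$ as in Remark \ref{rmk:pcp}(b), and then summing over $m$ with the Bell number identity; your freeness argument for the $S_m$-action spells out the division by $m!$ that the paper leaves implicit.
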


While the counting functions considered in Theorem \ref{thm:sqf} and
Corollary \ref{cor:sqf} refer to all square-free joint ordered factorisations
arising from $n$, we can also state the number of square-free joint ordered
factorisation of a given $m$-tuple $(n_1, n_2, \dots, n_m)$, in analogy to
the unconstrained count given in Equation (\ref{eq:Ntup}).
Curiously, in the corresponding formula, given in the next theorem, the
non-trivial divisor functions $c_l$ play a role analogous to that of
$c_l^{(-l)}$ in Equation (\ref{eq:Ntup}).
\begin{theorem}
Let $m\in\mathbb N$, $n_1, n_2, \dots, n_m \in\mathbb N$ all be greater than $1$.
Then the number of square-free joint ordered factorisations of $(n_1, n_2, \dots, n_m)$
is
\begin{equation*}
\mathcal N_{(n_1, n_2, \dots, n_m)}^\boxtimes = (-1)^{\Omega(n_1 n_2 \cdots n_m)} \sum_{l_1=1}^\infty \sum_{l_2=1}^\infty \cdots \sum_{l_m=1}^\infty (-1)^{\sum_{j=1}^m l_j} {\sum_{j=1}^m l_j \choose l_1\ l_2\ \dots\ l_m} \prod_{j=1}^m c_{l_j}(n_j).
\end{equation*}
\end{theorem}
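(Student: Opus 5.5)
We would prove the formula with multivariate Dirichlet generating functions and the standard transfer for ``Smirnov words'', i.e.\ words with no two equal adjacent letters. Introduce independent Dirichlet variables $s_1, \dots, s_m$, one for each index value. The goal is to show that the multiple series $\sum_{n_1,\dots,n_m} \mathcal N^\boxtimes_{(n_1,\dots,n_m)}\, n_1^{-s_1}\cdots n_m^{-s_m}$, taken over all $n_j \ge 1$ with the empty factorisation counted for $n_j=1$, equals a rational expression in suitable one-variable series. Reading off coefficients then gives the stated formula; we restrict to $n_j>1$ at the end, which forces $l_j \ge 1$.

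\textbf{Step 1 (the objects).} A square-free joint ordered factorisation is a word $j_1 j_2 \cdots j_L$ with $j_l \neq j_{l-1}$, each letter $j_l$ decorated by a square-free integer $f_l>1$. Let $y_j := \sum_{d>1,\ d \text{ square-free}} d^{-s_j} = S(s_j) - 1$, where $S(s)=\zeta(s)/\zeta(2s)$. Then each occurrence of letter $j$ contributes a factor $y_j$. Multiplicativity in the variable $s_j$ then ensures that the coefficient of $n_j^{-s_j}$ records $\prod_{l: j_l=j} f_l = n_j$.

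\textbf{Step 2 (Smirnov transfer).} We use the classical fact that the generating function of Smirnov words over a commuting alphabet with weights $y_j$ equals the generating function of all words, $1/(1-\sum_j x_j)$, after the substitution $x_j = y_j/(1+y_j)$. This holds because every word factors uniquely into maximal runs of equal letters, and inverting this run decomposition gives the substitution. It stays valid here since the variables attached to distinct letters commute. With $y_j = S(s_j)-1$ we get $x_j = 1 - 1/S(s_j) = 1 - \zeta(2s_j)/\zeta(s_j) = 1 - \sum_{k\ge1}\lambda(k)k^{-s_j}$. Here $\lambda$ is the Liouville function, and its Dirichlet series is $\zeta(2s)/\zeta(s)$. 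Hence $x_j = -\sum_{k>1}\lambda(k)k^{-s_j}$, the negative of the $\lambda$-twist of $\zeta(s_j)-1$.

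\textbf{Step 3 (extracting coefficients).} Expand $1/(1-\sum_j x_j) = \sum_{L\ge0}\bigl(\sum_j x_j\bigr)^L = \sum_{l_1,\dots,l_m\ge0}\binom{l_1+\cdots+l_m}{l_1\ \cdots\ l_m}\prod_j x_j^{l_j}$. Because $\lambda$ is completely multiplicative, the coefficient of $n_j^{-s_j}$ in $x_j^{l_j}$ is $(-1)^{l_j}$ times a sum of $\lambda(f_1)\cdots\lambda(f_{l_j}) = \lambda(n_j)$ over ordered factorisations of $n_j$ into $l_j$ factors greater than $1$. That coefficient is therefore $(-1)^{l_j}\lambda(n_j)\,c_{l_j}(n_j)$. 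Multiplying over $j$ and using $\prod_j\lambda(n_j) = (-1)^{\Omega(n_1\cdots n_m)}$ yields exactly the displayed formula. The sum is finite since $c_l(n_j)=0$ for $l>\Omega(n_j)$.

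\textbf{Main obstacle.} The delicate point is justifying the Smirnov substitution rigorously in this Dirichlet-series setting, where the ``letters'' are weighted by whole series rather than monomials. We would handle this at the level of formal series: each coefficient involves only finitely many terms because factors exceed $1$, so all manipulations are identities of formal multiple Dirichlet series. We would verify the run-decomposition identity $\frac{1}{1-\sum x_j} = \frac{1}{1-\sum y_j/(1+y_j)}$ combinatorially. Concretely, a run of $k$ copies of letter $j$, with $k$ square-free factors, is counted by $y_j^k$, and the alternating expansion $y/(1+y) = \sum_k(-1)^{k-1}y^k$ performs the inclusion--exclusion over merged runs.
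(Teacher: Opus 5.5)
Your proof is correct, and it takes a genuinely different route from the paper's. The paper splits the count into two pieces. The first is ordered square-free factorisations of each $n_j$ into $\nu_j$ parts, counted by $|c_{\nu_j}^{(-\nu_j)}(n_j)|$. The second is index sequences with prescribed multiplicities $\nu_j$ and no equal neighbours, for which it imports the inclusion--exclusion formula $e_{(\nu_1,\dots,\nu_m)}$ from \cite[Theorem 3]{LSDF&NSS}. It then reindexes by $l_j=\nu_j-k_j$ and resums the inner series with Newton's binomial series in the Dirichlet convolution algebra, $\sum_{k}\binom{k+l-1}{k}(e-\mu)^{*(k+l)}=(e-\mu)^{*l}*\mu^{*(-l)}=(1-e)^{*l}=c_l$.

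Your Smirnov substitution absorbs both the word count and this resummation into one classical identity. The coefficient of $y_1^{\nu_1}\cdots y_m^{\nu_m}$ in $1/(1-\sum_j y_j/(1+y_j))$ is exactly $e_{(\nu_1,\dots,\nu_m)}$, and the paper's $\binom{\nu_j-1}{k_j}$-sums are just the expansion of $(y_j/(1+y_j))^{l_j}$. You therefore never need $e_{(\nu)}$ or the associated divisor functions. The only arithmetic input is that the Dirichlet inverse of $\mu^2$ is Liouville's $\lambda$, and complete multiplicativity of $\lambda$ makes the global sign $(-1)^{\Omega(n_1\cdots n_m)}$ transparent.

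Your method is also more general. Taking $1+y_j=\zeta(s_j)$ instead gives $x_j\leftrightarrow e-\mu$ and recovers Equation (\ref{eq:Ntup}) with $c_l^{(-l)}=(e-\mu)^{*l}$. This explains the ``curious'' exchange of $c_l$ and $c_l^{(-l)}$ noted before the theorem, and any other admissible set of factors can be treated the same way.

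The paper's route, for its part, stays in the one-variable Dirichlet algebra with the authors' existing toolkit, and produces the intermediate counting formula in terms of $e_{(\nu)}$ and $|c_\nu^{(-\nu)}|$ along the way. Yours needs the (routine, as you say) remark that substituting formal Dirichlet series with vanishing constant term into an identity in $\mathbb{Z}[[y_1,\dots,y_m]]$ is a continuous ring homomorphism, hence legitimate.
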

\begin{proof}
The formation of a joint ordered factorisation can be considered as taking the
following steps, (1) factorising the number $n_j$ into $\nu_j$ non-trivial
(and, in the present case, square-free) factors, for each $j\in\{1, \dots, m\}$;
(2) labelling the $L = \sum_{j=1}^m \nu_j$ index-factor pairs with indices
$j_1, j_2, \dots, j_L \in \{1, \dots, m\}$ such that
index $j$ appears exactly $\nu_j$ times and
 $j_l \neq j_{l-1}$ for all
$l \in \{2, \dots, L\}$ and (3)
distributing the $\nu_j$ factors of $n_j$ to the pairs with index $j_j$, for
each $j \in \{1, \dots, m\}$.
The different ways of performing step (2) were counted in \cite[Theorem 3]{LSDF&NSS}, giving
\begin{equation*}
e_{(\nu_1, \nu_2, \dots, \nu_m)} = \sum_{k_1=0}^{\nu_1-1} \sum_{k_2=0}^{\nu_2-1} \cdots \sum_{k_m=0}^{\nu_m-1} (-1)^{\sum_{j=1}^m k_j} {\sum_{j=1}^m \nu_j - k_j \choose (\nu_1-k_1)\ (\nu_2-k_2)\ \dots\ (\nu_m-k_m)} \prod_{j=1}^m {\nu_j-1 \choose k_j}.
\end{equation*}
Hence, bearing in mind that $|c_\nu^{(-\nu)}(n)| = (-1)^{\Omega(n)+\nu} c_\nu^{(-\nu)}(n)$ is the number of ordered factorisations of $n$ into $\nu$ non-trivial
square-free factors, we find
\begin{align*}
\mathcal N_{(n_1, \dots, n_m)}^\boxtimes
&= \sum_{\nu_1=1}^\infty \cdots \sum_{\nu_m=1}^\infty e_{(\nu_1, \nu_2, \dots, \nu_m)} \prod_{j=1}^m (-1)^{\Omega(n_j)+\nu_j} c_{\nu_j}^{(-\nu_j)}(n_j)
\\
&= \sum_{\nu_1=1}^\infty \cdots \sum_{\nu_m=1}^\infty \sum_{k_1=0}^{\nu_1-1} \cdots \sum_{k_m=0}^{\nu_m-1}
{\sum_{j=1}^m (\nu_j - k_j) \choose (\nu_1 - k_1)\ \dots\ (\nu_m - k_m) }
\\
&\qquad\qquad\qquad\qquad \times
\prod_{j=1}^m (-1)^{k_j+\Omega(n_j)+\nu_j} {\nu_j-1 \choose k_j} c_{\nu_j}^{(-\nu_j)}(n_j)
\\
&= (-1)^{\Omega(n_1\cdots n_m)} \sum_{l_1=1}^\infty \cdots \sum_{l_m=1}^\infty
(-1)^{\sum_{j=1}^m l_m} {\sum_{j=1}^m l_m \choose l_1\ \dots\ l_m}
\\
&\qquad\qquad\qquad\qquad \times
\sum_{k_1=0}^\infty\cdots\sum_{k_m=0}^\infty \prod_{j=1}^m {k_j+l_j-1 \choose k_j} c_{k_j+l_j}^{(-k_j-l_j)}(n_j)
\\
&= (-1)^{\Omega(n_1\cdots n_m)} \sum_{l_1=1}^\infty \cdots \sum_{l_m=1}^\infty
(-1)^{\sum_{j=1}^m l_m} {\sum_{j=1}^m l_m \choose l_1\ \dots\ l_m}
\\
&\qquad\qquad\qquad\qquad \times
\prod_{j=1}^m \sum_{k=0}^\infty {k+l_j-1 \choose k} c_{k+l_j}^{(-k-l_j)}(n_j).
\end{align*}
Now we use the facts that $c_k$ and $c_k^{(-k)}$ can be defined as Dirichlet
convolution powers,
\begin{equation*}
c_k = (1 - e)^{*k}, \qquad c_k^{(-k)} = (e - \mu)^{*k},
\end{equation*}
where $\mu$ is the M\"obius function, $1$ is the constant function $1$ and
$e$, the arithmetic function such that $e(1) = 1$ and $e(n) = 0$ $(n\in\mathbb N, n \ge 2)$, is the neutral element for the Dirichlet convolution product
in the commutative algebra of arithmetic functions (see \cite{LSDF&NSS} for details), to find
\begin{align*}
\sum_{k=0}^\infty {k+l_j-1 \choose k} c_{k+l_j}^{(-k-l_j)}
&= \sum_{k=0}^\infty {k+l_j-1 \choose k} (e - \mu)^{*(k+l_j)}
\\
&= (e - \mu)^{*l_j} * \sum_{k=0}^\infty (-1)^k {k+l_j-1 \choose k} (\mu - e)^{*k}
\\
&= (e - \mu)^{*l_j} * (e + (\mu - e))^{*(-l_j)}
= (e - \mu)^{*l_j} * \mu^{*(-l_j)}
\\
&= (1 - e)^{*l_j} = c_{l_j}.
\end{align*}
We here used Newton's binomial series in the Dirichlet convolution algebra
and the fact that $1 = \mu^{*(-1)}$.
\end{proof}


\begin{thebibliography}{10}

\bibitem{C&M66}
L. Carlitz and L. Moser,
\newblock On some special factorizations of $(1-x{^n})/(1-x)$.
\newblock {\em Canad. Math. Bull.} {\bf 9} (1966), 421--426.

\bibitem{Comtet}
L. Comtet,
\newblock {\em Advanced Combinatorics}.
\newblock D. Reidel, Dordrecht, 1974.

\bibitem{Duncan2009}
B. Duncan and R. Peele,
\newblock Bell and stirling numbers for graphs.
\newblock {\em J. Integer Seq.} {\bf 12} (2009), Article 09.7.1.

\bibitem{GKP}
R. Graham, D. Knuth, and O. Patashnik,
\newblock {\em Concrete Mathematics},
\newblock Addison-Wesley, Boston, 2nd edition, 1994.

\bibitem{HHLS}
S. L. Hill, M. N. Huxley, M. C. Lettington, and K. M. Schmidt,
\newblock Some properties and applications of non-trivial divisor functions.
\newblock {\em Ramanujan J.} {\bf 51} (2020), 611--628.

\bibitem{2017SASS}
M. N. Huxley, M. C. Lettington, and K. M. Schmidt,
\newblock On the structure of additive systems of integers.
\newblock {\em Period. Math. Hung.} {\bf 78} (2019), 178--199.

\bibitem{L2025MF&DF}
Ambrose~D. Law, Matthew~C. Lettington, and Karl~Michael Schmidt,
\newblock Multifactorisations and divisor functions.
\newblock {\em Util. Math.} {\bf 125} (2025), 43--60.

\bibitem{LSDF&NSS}
M. C. Lettington and K. M. Schmidt,
\newblock Divisor functions and the number of sum systems.
\newblock {\em Integers} {\bf 20} (2020), \#A61.

\bibitem{LSfut}
M. C. Lettington and K. M. Schmidt,
\newblock On divisor function polynomials and {D}irichlet series,
\newblock {\em (in preparation)}, (2026).


\bibitem{MacMah}
P. A. MacMahon.
\newblock Second memoir on the composition of numbers.
\newblock {\em Philos. Trans. R. Soc., Ser. A}, {\bf 207} (1908), 65--134.

\bibitem{Munagi}
A. O. Munagi,
\newblock $k$-complementing subsets of nonnegative integers.
\newblock {\em Int. J. Math. Math. Sci.}
{\bf 2} (2005), 215--224.

\end{thebibliography}
\end{document}